\newtheorem{theorem}{Theorem}
\newtheorem{lemma}{Lemma}
\newtheorem{corollary}{Corollary}
\newtheorem{remark}{Remark}
\newtheorem*{theorem*}{Theorem}
\newcommand{\R}{\mathbb R}
\newcommand{\e}{\varepsilon}
\newcommand{\vp}{\varphi}
\newcommand{\n}{\eta}
\title[Principal values of Riesz transforms]{Non existence of principal values of signed Riesz transforms of non integer dimension}
\author[A.Ruiz de Villa \and X.Tolsa]{Aleix Ruiz de Villa and Xavier Tolsa}
\address{Aleix Ruiz de Villa, Departament de Matem\`atiques, Universitat Aut\`onoma de
Bar\-ce\-lo\-na, 08193 Bellaterra (Barcelona), Catalunya}
\email{aleixrv@mat.uab.cat}
\address{Xavier Tolsa,
Instituci\'o Catalana de Recerca i Estudis Avan\c{c}ats (ICREA) and Departament de Matem\`atiques, Universitat Aut\`onoma de
Bar\-ce\-lo\-na, 08193 Bellaterra (Barcelona), Catalunya}
\email{xtolsa@mat.uab.cat}
\thanks{A. Ruiz de Villa was supported by grant AP-2004-5141. Also, both authors were partially supported by grants MTM2007-62817 (Spain) and 2005-SGR-00774 (Gene\-ra\-litat
de Catalunya)}
\begin{document}
\maketitle

\begin{abstract}
In this paper we prove that, given $s\geq 0$, and a Borel non zero measure $\mu$ in $\R^m$, if for $\mu$-almost every $x \in \R^m$ the limit $$\lim_{\e \to 0} \int_{|x-y|>\e}\frac{x-y}{|x-y|^{s+1}}d\mu(y)$$ exists and $0<\limsup_{r \to 0}\frac{\mu(B(x,r))}{r^s}<\infty$, then $s$ in an integer. In particular, if $E \subset \R^m$ is a set with positive and bounded $s$-dimensional Hausdorff measure $H^s$ and for $H^s$-almost every $x \in E$ the limit $$\lim_{\e \to 0} \int_{|x-y|>\e}\frac{x-y}{|x-y|^{s+1}}d H^s_{|E}(y)$$ exists, then $s$ is integer.
\end{abstract}

\begin{section}{Introduction}

Given a Borel measure $\mu$ in $\R^m$ and  $0<s\leq m$,
the $s$-Riesz transform of $\mu$ is
$$R^s \mu (x)=\int \frac{x-y}{|x-y|^{s+1}}\,d\mu(y),\qquad
x \notin \textrm{supp}(\mu) .$$
 Since for $x$ in the support of $\mu$ the integral may not be convergent, for $\e>0$ one considers the truncated Riesz transform
$$R_{\e}^s\mu (x)=\int_{|x-y|>\e} \frac{x-y}{|x-y|^{s+1}}\,d\mu(y),\qquad x\in\R^m.$$

The lower and upper $s$-dimensional densities of $\mu$ at $x$ are defined by
$$\theta_{\mu,*}^s(x)=\liminf_{r \to 0} \frac{\mu(B(x,r))}{r^s},\qquad \theta_{\mu}^{s,*}(x)=\limsup_{r \to 0} \frac{\mu(B(x,r))}{r^s}.$$ In the case where $\theta_{\mu,*}^s(x)=\theta_{\mu}^{s,*}(x)$ one calls this quantity the ($s$-dimensional) density of the measure $\mu$ at $x$, denoted by $\theta^s_\mu(x)$.

 The main result of this paper is the following.

\begin{theorem}\label{main3}
For $0\leq s \leq m$, 
let $\mu$ be a finite Radon measure in $\R^m$ such that $0<\theta^{s,*}_\mu (x)<\infty$ and $\lim_{\e \rightarrow 0} R^s_\e \mu(x) \mbox{  exists}$ for all $x$ in a set of positive $\mu$-measure. Then $s \in \mathbb Z$.
\end{theorem}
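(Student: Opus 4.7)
The plan is to proceed via a blow-up / tangent measure argument, reducing the theorem to a rigidity statement about an ``annularly symmetric'' tangent measure, which one then rules out for non-integer $s$.

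First, by standard Egorov and Lusin-type reductions, I would pass to a compact subset $F$ with $\mu(F)>0$ on which $\theta^{s,*}_\mu$ is bounded above and below by positive constants, the truncated transforms $R^s_\e\mu$ converge uniformly on $F$ to some bounded function as $\e\to 0$, and almost all of the $\mu$-mass sitting near $F$ lies inside $F$. One can then work with $\mu$ restricted (up to controllable errors) to $F$, so that the blow-ups centered at $F$-points see only the relevant mass rather than spurious far-away contributions.

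Second, at a $\mu$-typical point $x_0\in F$ I would consider a tangent measure $\nu$ of $\mu$, i.e.\ a weak-$*$ subsequential limit of the rescaled dilations $r_j^{-s}\,(T_{x_0,r_j})_\#\mu$ with $T_{x_0,r_j}(y)=(y-x_0)/r_j$ and $r_j\to 0$. Because the Riesz kernel $y/|y|^{s+1}$ is homogeneous of degree $-s$, this rescaling matches the kernel exactly, and existence of $\lim_\e R^s_\e\mu(x_0)$ forces the annular symmetry
\[
\int_{a<|y|<b}\frac{y}{|y|^{s+1}}\,d\nu(y)=0 \qquad \text{for every } 0<a<b.
\]
A tangent-on-tangent iteration in the spirit of Preiss should propagate this symmetry to $\nu$-a.e.\ point: for $\nu$-a.e.\ $x$ and all $0<a<b$, $\int_{a<|x-y|<b}(x-y)\,|x-y|^{-s-1}\,d\nu(y)=0$, so $\nu$ is ``$K$-symmetric'' in this strong annular sense.

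The main obstacle --- and the heart of the paper --- is to show that no nontrivial measure $\nu$ with $0<\theta^{s,*}_\nu(x)<\infty$ on a set of positive $\nu$-measure can be $K$-symmetric unless $s\in\mathbb{Z}$. To rule out non-integer $s$ I would decompose the annular identity in spherical harmonics: vanishing of the vector-valued (dipole) moment on every annulus at every scale imposes strong rotational and self-similar constraints on $\nu$, which should force its further tangent measures to concentrate on affine subspaces. Combined with Marstrand's classical density theorem, this would make $s$ an integer, giving a contradiction. This final rigidity step is where the genuinely new input is needed, since existence of the principal value of the Riesz transform is strictly weaker than existence of the scalar density, and Marstrand's theorem does not apply directly to $\mu$ itself.
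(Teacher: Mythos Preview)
Your proposal is essentially the tangent-measure approach of Mattila--Preiss, and it has a genuine gap precisely at the point the paper is designed to circumvent. The hypothesis of the theorem is that the \emph{upper} density $\theta^{s,*}_\mu(x)$ is positive and finite on a set of positive measure; it says nothing about the lower density $\theta^s_{\mu,*}(x)$. Your Egorov/Lusin reduction only pins $\theta^{s,*}_\mu$ between two positive constants on $F$; it does not prevent $\theta^s_{\mu,*}(x_0)=0$. When this happens, the blow-ups $r_j^{-s}(T_{x_0,r_j})_\#\mu$ can degenerate to $0$ along some sequences $r_j\to0$, and even if you pick a sequence on which they do not, the resulting tangent measure $\nu$ need not itself have positive lower (or even upper) density at $\nu$-a.e.\ point. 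The ``tangent-on-tangent'' propagation step you invoke, and the subsequent rigidity argument forcing $\nu$ onto an affine subspace, both rely on having nontrivial, uniformly controlled tangent measures at generic points of $\nu$; without a lower-density bound this machinery simply does not get off the ground. The paper states this explicitly in the introduction: ``for all usual arguments involving tangent measures, the assumption \eqref{infden} on the lower density is essential.''

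The paper therefore avoids tangent measures entirely. Instead it works with a carefully designed smooth truncation $\vp$ and the operators $R^s_{\vp,\e}$, proves a first-order Taylor expansion for $R^s_{\vp,\e}\mu(x)-R^s_{\vp,\e}\mu(0)$ (Lemma~\ref{taylorlemma}), finds $n+2$ points of $F_\delta$ in general position inside a small ball (Lemma~\ref{punts}), and combines these to bound a scalar quantity $U^\e(y_0)$ from above by oscillations of $R^s_{\vp,\e}\mu$ plus a quadratic error (Lemma~\ref{acotpral}). The crucial new input is Lemma~\ref{1beta}: by a pigeonhole/doubling argument on the densities $\theta^s(t)$ one locates a scale $\e$ at which $\theta^s(4\e)\le C\theta^s(\e)$, and the specific shape of $\vp$ then forces $|U^\e(y_0)|\gtrsim (n+1-s)\theta^s(\e)/\e$, which is strictly positive exactly when $s\notin\mathbb Z$. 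Playing this lower bound against the upper bound and the Cauchy-type oscillation estimate of Lemma~\ref{cauchy} yields a contradiction for suitably chosen parameters. None of this requires any control on $\theta^s_{\mu,*}$, only the growth bound \eqref{M} coming from the upper density. Your sketch does not contain an analogue of this step, and the vague appeal to spherical harmonics and Marstrand's theorem at the end is not a substitute for it.
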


Consider now the case the case where $\mu$ coincides with the $s$-dimensional Hausdorff measure $H^s$ on a set $E$ with $0<H^s(E)<\infty$. Recall that for $H^s$-almost every $x \in E$ we have $0<\theta_{H^s_{|E}}^{s,*}(x)<\infty$.
So we deduce the following corollary.

\begin{corollary} \label{coro}
For $0\leq s \leq m$, let $E \subset \R^m$ be a set satisfying $0<H^s(E)<\infty$ such that for $\mathcal H^s$-almost every $x \in E$ the limit $$\lim_{\e \to 0} \int_{|x-y|>\e}\frac{x-y}{|x-y|^{s+1}}dH^s_{|E}(y)$$ exists.  Then $s \in \mathbb Z$.
\end{corollary}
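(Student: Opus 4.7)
My plan is to derive the corollary as a direct consequence of Theorem \ref{main3} by taking $\mu = H^s_{|E}$ and verifying that the two hypotheses of the theorem are satisfied on a set of positive $\mu$-measure. The measure $\mu$ is clearly a finite Radon measure on $\R^m$, since $H^s(E) < \infty$ and Hausdorff measures are Borel regular. The existence of the principal value $\lim_{\e \to 0} R^s_\e \mu(x)$ for $\mu$-a.e. $x$ is immediate from the hypothesis of the corollary, once one observes that $H^s|_E$-almost every point of $E$ is the same thing as $\mu$-almost every point.

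The only substantive input is then the density hypothesis: I need $0 < \theta^{s,*}_\mu(x) < \infty$ for $\mu$-a.e. $x$. This is a classical fact in geometric measure theory (essentially due to Besicovitch, see, e.g., Mattila's book \emph{Geometry of Sets and Measures in Euclidean Spaces}, Theorem 6.2): for any Borel set $E \subset \R^m$ with $0 < H^s(E) < \infty$, one has
\[
2^{-s} \leq \theta^{s,*}_{H^s_{|E}}(x) \leq 1 \qquad \text{for } H^s\text{-a.e. } x \in E.
\]
In particular the upper density is bounded away from $0$ and $\infty$ on a set of full $\mu$-measure.

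Putting these ingredients together, the measure $\mu = H^s_{|E}$ satisfies all the hypotheses of Theorem \ref{main3}, and we conclude $s \in \mathbb Z$.

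There is no real obstacle here, since the corollary is essentially a restatement of Theorem \ref{main3} for the particular case $\mu = H^s_{|E}$; the work all sits in the theorem itself, and the only additional nontrivial ingredient is the cited classical density bound, which is completely standard.
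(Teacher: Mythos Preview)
Your proof is correct and follows exactly the same approach as the paper: set $\mu=H^s_{|E}$, invoke the classical density estimate $2^{-s}\le \theta^{s,*}_{H^s_{|E}}(x)\le 1$ for $H^s$-a.e.\ $x\in E$, and apply Theorem~\ref{main3}. The paper states this derivation in one line just before the corollary; your write-up simply spells out the same reasoning with an explicit reference for the density bound.
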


Let us remark that Mattila and Preiss \cite{MP} already proved that if one assumes
\begin{equation}\label{infden} 
\theta_{\mu,*}^s(x) > 0 \qquad \mu{\rm -a.e. }\;\; x\in \R^n
\end{equation}
(instead of $\theta_{\mu}^{s,*}(x) > 0$ $\mu$-a.e.), then the $\mu$-a.e. existence of the principal value $\lim_{\e \rightarrow 0} R^s_\e \mu(x)$ forces
$s$ to be an integer. Later on, Vihtil\"a \cite{V} showed that this also holds if one assumes \eqref{infden} and
\begin{equation}\label{hip2}
\sup_{\e>0} |R_{\e}^s\mu (x)|<\infty \qquad \mu{\rm -a.e. }\;\; x\in \R^n
\end{equation}
(instead of the existence of the principal value $\lim_{\e \rightarrow 0} R^s_\e \mu(x)$ $\mu$-a.e.).
The proofs in \cite{MP} and \cite{V} rely on the use of tangent measures, and for these arguments, and for all usual arguments involving tangent measures, the assumption \eqref{infden} on the lower density is essential. So to prove theorem \ref{main3} we have followed 
a quite different approach, inspired in part by some of the techniques used in \cite{T1} and \cite{T2}. However, we have not been able to use the weaker assumption \eqref{hip2} instead of the one concerning the existence of principal values.

On the other hand, the case $0\leq s\leq 1$ of theorem \ref{main3}
follows from Prat's results \cite{P1}, \cite{P2}. In this case, the so called curvature method works, and one 
can even assume \eqref{hip2} instead of the fact that principal value
$\lim_{\e \rightarrow 0} R^s_\e \mu(x)$ exists $\mu$-a.e.

If one combines corollary \ref{coro} with the results in \cite{MM} and
\cite{T1} one gets:

\begin{theorem*}\label{junt}
For $0< s \leq m$, let $E \subset \R^m$ be a set satisfying $0<H^s(E)<\infty$.
The principal value
$$\lim_{\e \to 0} \int_{|x-y|>\e}\frac{x-y}{|x-y|^{s+1}}dH^s_{|E}(y)$$ exists
for $H^s$-almost every $x \in E$ if and only if $s$ is integer and $E$ is $s$-rectifiable.
\end{theorem*}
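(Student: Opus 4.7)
The plan is to establish the biconditional by splitting into the two implications and combining Corollary \ref{coro} with the cited results \cite{MM} and \cite{T1}.

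For the ``only if'' direction, suppose the principal value exists for $H^s$-a.e.\ $x\in E$. Since $0<H^s(E)<\infty$, standard density properties of Hausdorff measure give $0<\theta^{s,*}_{H^s_{|E}}(x)<\infty$ for $H^s$-a.e.\ $x\in E$ (as already noted in the paper), so Corollary \ref{coro} applies to $\mu=H^s_{|E}$ and immediately yields $s\in\mathbb Z$. Once $s$ is known to be a positive integer, the $s$-rectifiability of $E$ is provided by the result of \cite{T1}, which states precisely that the $H^s$-a.e.\ existence of the principal value of the $s$-Riesz transform of $H^s_{|E}$, in the regime where $s$ is integer and $0<H^s(E)<\infty$, forces $E$ to be $s$-rectifiable.

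For the ``if'' direction, suppose $s$ is a positive integer and $E$ is $s$-rectifiable. Then, up to an $H^s$-null set, $E$ is contained in a countable union of $s$-dimensional Lipschitz graphs $\Gamma_j$. On each $\Gamma_j$ the $s$-Riesz kernel defines a Calder\'on--Zygmund operator with respect to $H^s_{|\Gamma_j}$, and the $H^s$-a.e.\ existence of the corresponding principal value is classical; this is the content of \cite{MM}. At a typical point $x\in E\cap\Gamma_j$, the contribution from $E\cap\Gamma_j$ to the truncated Riesz integral is controlled by the result on $\Gamma_j$, while the contribution from $E\setminus\Gamma_j$ is handled by an annular decomposition: at an $H^s$-density point of $\Gamma_j$ the set $E\setminus\Gamma_j$ has vanishing $s$-dimensional density, and the corresponding tail converges absolutely. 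Assembling these two pieces yields the $H^s$-a.e.\ existence of the principal value on $E$.

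The main obstacle, such as it is, lies entirely in the cited deep results, namely the rectifiability criterion of \cite{T1} in the ``only if'' direction, and the principal value existence on Lipschitz graphs from \cite{MM} in the ``if'' direction; granted those inputs, the present theorem is a formal synthesis with Corollary \ref{coro}.
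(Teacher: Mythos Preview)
Your proposal is correct and follows essentially the same route the paper indicates: the paper does not give a detailed proof of this theorem but simply states that it follows by combining Corollary~\ref{coro} with \cite{MM} and \cite{T1}, and you have spelled out exactly that combination. One small caveat on attribution: \cite{MM} treats the Cauchy integral (the case $s=1$), so for general integer $s$ the ``if'' direction on Lipschitz graphs is more accurately a consequence of the $L^2$-boundedness of the $s$-Riesz transform on Lipschitz graphs together with standard Calder\'on--Zygmund theory for principal values (this is discussed, for instance, in \cite{T1}); but this does not affect the validity of your argument.
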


Recall that $E\subset\R^m$ is called  $s$-rectifiable if it is contained $H^s$-a.e. in a countable union of $s$-dimensional $C^1$-submanifolds of
$\R^m$. See also \cite{MP} for other previous results concerning the case $s$ integer, and \cite{M}, \cite{T0}, for the case $s=1$.

It is interesting to compare the last theorem with well known results in geometric measure theory due essentially to Marstrand \cite{Marstrand} and Preiss \cite{Preiss}:

\noindent{\em For $0< s \leq m$, let $E \subset \R^m$ be a set satisfying $0<H^s(E)<\infty$.
The density $\theta^s_{H^s|E}(x)$ exists 
for $H^s$-almost every $x \in E$ if and only if $s$ is integer and $E$ is $s$-rectifiable.}

\noindent Notice the analogies between this statement and the previous theorem.
\end{section}

\begin{section}{Main tools}

Given two different quantities $a,b$ we use the notation $a\lesssim b$ if there exists a fixed constant $C>0$ satisfying $a\leq C b$, with $C$ depending at most on $m$ and $s$. If also $b\lesssim a$, then we write $a\approx b$. Given $x \in \R^m$ and $r>0$, $B(x,r)$ stands for the open ball of center $x$ and radius $r$, and $\theta^s(x,r):=\mu(B(x,r))/r^s$ stands for the (average) $s$-dimensional density of the ball $B(x,r)$. In the case $x=0$ we write $\theta^s(r)=\theta^s(0,r)$. Throughout the paper $n$ will denote an integer satisfying $n< s \leq n+1\leq m$.

Given $0<\rho<1/2$ small enough, which will be fixed below, consider a function $\vp \in \mathcal C^2(0,\infty)$ satisfying:
\begin{itemize}
\item [(i)] $\vp(r)=r^{(s+1)/2}$ if $0\leq r\leq 1$,
\item [(ii)] $\vp(r)=-\frac{r}{\rho}+1+\rho+\frac{1}{\rho}$ if $ 1+\rho^2\leq r\leq 1+\rho^2+\rho$,
\item [(iii)] supp$(\vp)\subset [0, 1+ \rho+ 2\rho^2]$, $|\vp(r)|\leq C$, $|\vp'(r)|\leq 1 / \rho$ and $|\vp''(r)|\leq C_\rho$  for all $r>0$, where $C_\rho$ depends on $\rho$.
\end{itemize}
See fig. \ref{fig}.
\begin{figure}
\begin{center}
\includegraphics[width=9.5cm]{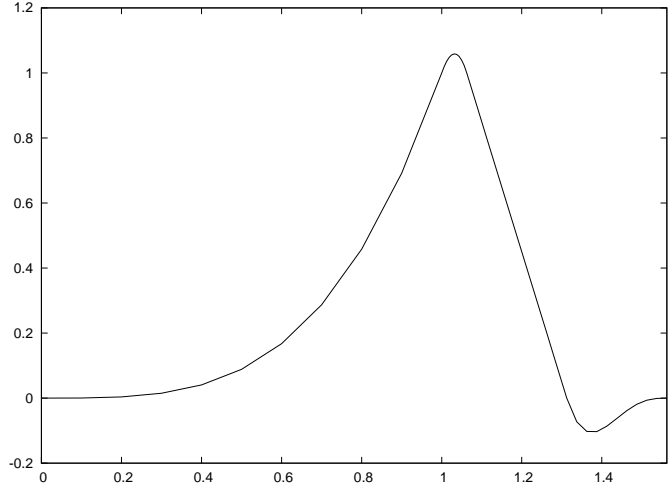}
\end{center}
\caption{\label{fig} Example of the function $\vp$ for the values $\rho=1/4$ and $s=6$.}
\end{figure}

Given $\e>0$, consider the operator:
$$R_{\vp,\e}^s\mu(x)=\int \vp\left( \frac{|x-y|^2}{\e^2}\right)\frac{x-y}{|x-y|^{s+1}}d\mu(y)=\int k_{\vp,\e}(x-y)d\mu(y).$$
Notice that $k_{\vp,\e}$ is a kernel supported on $B(0,3\e)$ satisfying $\|k_{\vp,\e}\|_\infty\leq C / \e^s$ and
\begin{equation}\label{kernel}
\|\nabla k_{\vp,\e}\|_\infty\leq C(\rho) / \e^{s+1}.
\end{equation}
Also observe that
\begin{align*}
R_{\vp,\e}^s\mu(x)&=\int \int_{0< t <\frac{|x-y|^2}{\e^2}}\vp'(t)dt \frac{x-y}{|x-y|^{s+1}}d\mu(y)=\int \vp'(t) R_{\e\sqrt{t}}^s \mu(x)dt.
\end{align*}
Using the fact that $\int \vp'(t)dt < \infty$ and $ \sup_{\e>0}|R^s_\e \mu(x)|<\infty$ for $\mu$-almost all $x \in\R^m$, we conclude that if $\lim_{\e \rightarrow 0} R^s_\e \mu(x)$ exists, then $\lim_{\e \rightarrow 0} R^s_{\vp,\e} \mu(x)$ also exists.

Given $C_0, r_0, \e_0>0$  and $0<\delta<1$, set
\begin{align}\label{defdelta}
F_\delta:=&\bigl\{x \in \R^m: \mu(B(x,r))\leq 2\theta^{s,*}_\mu(x) r^s \textrm{ for all } r \leq r_0, \nonumber  \\ &|R_{\vp,\e}^s\mu(x)-R_{\vp,\e'}^s\mu(x)|\leq \delta \textrm{ for all } \e,\e' \leq \e_0, \textrm{ and } \theta_\mu^{s,*}(x) \leq C_0\bigr\}.
\end{align}
If $r_0$ and $\e_0$ are small enough and $C_0$ is big enough, we have $\mu(F_\delta) >0$. Also observe that if $x \in F_\delta$, for all $r>0$,
\begin{equation}\label{M}
\mu(B(x,r)) \leq M r^s,
\end{equation}
where $M=\max\{2C_0, \mu(\R^m)/r_0^s\}$.

\begin{lemma}\label{taylorlemma}
Suppose that $0 \in F_\delta$ and let $x \in B(0,\e/4)$, then:
\begin{equation}\label{taylor}
R_{\vp,\e}^s\mu(x)-R_{\vp,\e}^s\mu(0)=T^\e(x)+E(x),
\end{equation}
where
\begin{equation}
T^\e(x)=\int \frac{1}{|y|^{s+1}}\left[ \vp\left( \frac{|y|^2}{\e^2}\right)\left(x-\frac{(s+1)(x\cdot y)y}{|y|^2} \right)+\vp'\left( \frac{|y|^2}{\e^2}\right)\frac{2(x\cdot y)y}{\e^2} \right]d\mu(y),
\end{equation}
and
$$|E(x)|\leq C_1 \theta^s(3 \e) \frac{|x|^2}{\e^2}.$$
The constant $C_1$ only depends on $\rho$ (and also $C_\rho$).
\end{lemma}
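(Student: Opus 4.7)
Write the difference as a single integral
$$R_{\vp,\e}^s\mu(x)-R_{\vp,\e}^s\mu(0)=\int\bigl[k_{\vp,\e}(x-y)-k_{\vp,\e}(-y)\bigr]\,d\mu(y),$$
and Taylor-expand the integrand in $x$ around $x=0$ to first order:
$$k_{\vp,\e}(x-y)-k_{\vp,\e}(-y)=x\cdot\nabla k_{\vp,\e}(-y)+R_2(x,y),$$
with remainder
$$R_2(x,y)=\int_0^1(1-t)\,x^{T}(\nabla^2 k_{\vp,\e})(-y+tx)\,x\,dt.$$
The strategy is to identify the linear piece with $T^\e(x)$ and to control $R_2$ by $C_\rho|x|^2/\e^{s+2}$, which after integration against $\mu$ restricted to the support of the kernel yields $E(x)$.

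The first step is a direct computation: write $k_{\vp,\e}(z)=\vp(|z|^2/\e^2)\,z/|z|^{s+1}$ and differentiate using the product and chain rules. Evaluated at $z=-y$ one obtains componentwise
$$\partial_j k_{\vp,\e}^{\,i}(-y)=\vp\!\left(\frac{|y|^2}{\e^2}\right)\!\left[\frac{\delta_{ij}}{|y|^{s+1}}-\frac{(s+1)y_iy_j}{|y|^{s+3}}\right]+\vp'\!\left(\frac{|y|^2}{\e^2}\right)\!\frac{2\,y_iy_j}{\e^2|y|^{s+1}},$$
so that $x\cdot\nabla k_{\vp,\e}(-y)$ is exactly the integrand defining $T^\e(x)$. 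This disposes of the principal term. Note that the computation is legitimate across $y=0$ because $\vp(r)=r^{(s+1)/2}$ for $r\le 1$ forces $k_{\vp,\e}(z)=z/\e^{s+1}$ for $|z|\le\e$, so $k_{\vp,\e}\in\mathcal C^2(\R^m)$.

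The crucial estimate is $\|\nabla^2 k_{\vp,\e}\|_\infty\le C_\rho/\e^{s+2}$. On the ball $\{|z|\le\e\}$ the kernel is linear in $z$, so $\nabla^2 k_{\vp,\e}\equiv 0$ there. On the annulus $\{\e\le|z|\le\sqrt{1+\rho+2\rho^{2}}\,\e\}\subset B(0,3\e)$ one expands the second derivative using Leibniz: each term contains either $\vp$, $\vp'$, or $\vp''$ evaluated at $|z|^{2}/\e^{2}$ (bounded by $C$, $1/\rho$, $C_\rho$ respectively), multiplied by derivatives of $z\mapsto z/|z|^{s+1}$ (of size $\e^{-(s+1)-k}$ for the $k$-th derivative on that annulus) and possibly by factors $|z|^{j}/\e^{2j}\lesssim 1/\e^{j}$ coming from differentiating $\vp(|z|^{2}/\e^{2})$. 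Every resulting term is bounded by $C_\rho/\e^{s+2}$.

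Finally, since $|x|\le\e/4$ and the support of $y\mapsto k_{\vp,\e}(x-y)-k_{\vp,\e}(-y)-x\cdot\nabla k_{\vp,\e}(-y)$ is contained in $B(0,\sqrt{1+\rho+2\rho^{2}}\,\e+\e/4)\subset B(0,3\e)$, one integrates the pointwise bound
$$|R_2(x,y)|\le \tfrac{1}{2}|x|^{2}\,\sup_{t\in[0,1]}\|\nabla^{2}k_{\vp,\e}(-y+tx)\|\le \tfrac{C_\rho}{2}\,\frac{|x|^{2}}{\e^{s+2}}$$
against $\mu$ to obtain
$$|E(x)|\le \frac{C_\rho |x|^{2}}{\e^{s+2}}\,\mu(B(0,3\e))\le C_1\,\theta^{s}(3\e)\,\frac{|x|^{2}}{\e^{2}}.$$
The only delicate point I expect is the uniformity of the second-derivative bound; everything else is bookkeeping of Taylor's theorem and the explicit computation of $\nabla k_{\vp,\e}$.
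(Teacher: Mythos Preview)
Your argument is correct. The identification of $x\cdot\nabla k_{\vp,\e}(-y)$ with the integrand of $T^\e(x)$ is right, and the key observation that $k_{\vp,\e}(z)=z/\e^{s+1}$ on $\{|z|\le\e\}$ (hence $\nabla^2 k_{\vp,\e}\equiv 0$ there) does give the global Hessian bound $\|\nabla^2 k_{\vp,\e}\|_\infty\le C_\rho/\e^{s+2}$, from which the estimate on $E(x)$ follows immediately after integrating over the support contained in $B(0,3\e)$.

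The paper takes a somewhat different route. Instead of a multivariable Taylor expansion of $k_{\vp,\e}$, it applies a \emph{one-variable} Taylor expansion to the scalar function $g(t)=\vp(t)/t^{(s+1)/2}$ at $t_0=|y|^2/\e^2$, evaluates at $t=|x-y|^2/\e^2$, and then multiplies the resulting identity by the vector $(x-y)$. This produces the same linear term $T^\e(x)$, but the remainder splits into two pieces $E_1$ (coming from the non-quadratic part of $(|x|^2-2x\cdot y)(x-y)$) and $E_2$ (involving $g''$), each of which is analyzed separately on $\{|y|<\e/2\}$ and $\{\e/2\le|y|\le 3\e\}$. Your approach is more direct and avoids this case analysis, since the Hessian bound handles everything at once; the paper's reduction to a one-dimensional expansion is a bit more hands-on but makes the cancellation for small $|y|$ (where $g\equiv 1$ and hence $g''\equiv 0$) equally transparent. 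Both routes rest on the same structural fact, namely that the special choice $\vp(r)=r^{(s+1)/2}$ for $r\le 1$ kills the error near the origin.
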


\begin{proof}
We will prove equality (\ref{taylor}) as in \cite{T2}. Applying Taylor's formula to the function $g(t)=\vp(t)/t^{(s+1)/2}$ at a point $t_0>0$ we have $$\frac{\vp(t)}{t^{(s+1)/2}}=\frac{\vp(t_0)}{t_0^{(s+1)/2}}+\frac{t_0\vp'(t_0)-(s+1)\vp(t_0)/2}{t_0^{(s+3)/2}}(t-t_0)+g''(\xi)\frac{(t-t_0)^2}{2},$$
for some $\xi \in [t,t_0]$. Notice that if $0<t\leq 1$, then $\vp(t)/t^{(s+1)/2}=1$. Setting $t=|x-y|^2/\e^2$ and $t_0=|y|^2/2$, and multiplying by the vector $(x-y)$ we get
\begin{align*}
\vp\left(\frac{|x-y|^2}{\e^2}\right)\frac{x-y}{|x-y|^{s+1}}= & \,\vp \left( \frac{|y|^2}{\e^2}\right)\frac{-y}{|y|^{s+1}}+\vp \left( \frac{|y|^2}{\e^2}\right)\frac{x}{|y|^{s+1}}\\ &+\frac{|y|^2\vp'\left( \frac{|y|^2}{\e^2}\right)/\e^2-(s+1)\vp\left( \frac{|y|^2}{\e^2}\right)/2}{|y|^{s+3}}(|x|^2-2x\cdot y)(x-y)\\ &+ g''(\xi_{x,y})\frac{(|x|^2-2x\cdot y)^2}{2 \e^{s+5}}(x-y),
\end{align*}
where $\xi_{x,y} \in [|x-y|^2/\e^2,|y|^2/\e^2]$. Integrating with respect to $y$ we obtain (\ref{taylor}) with $$E(x)=\int E(x,y) d\mu(y),$$ where
\begin{align*}
E(x,y)&=\frac{1}{|y|^{s+3}}\left[\frac{|y|^2}{\e^2}\vp'\left(\frac{|y|^2}{\e^2}\right)-\frac{s+1}{2}\vp\left(\frac{|y|^2}{\e^2} \right) \right][|x|^2x-|x|^2y-2(x\cdot y)x]\\& +g''(\xi_{x,y})\frac{(|x|^2-2 x \cdot y)^2}{2 \e^{s+5}}(x-y)=E_1(x,y)+E_2(x,y).
\end{align*}
For $i=1,2$ consider the decomposition
$$\int E_i(x,y) d\mu(y)= \left(\int_{|y|<  \e/2}+\int_{\e/2 \leq |y|\leq 3\e}\right) E_i(x,y) d\mu(y)=A_i+B_i.$$
Let us estimate $E_1$ first.

\begin{itemize}
\item [(a)] If $|y|\leq\e/2$, using that $\vp(r)=r^{(s+1)/2}$ for $0<r\leq 1$, $$\frac{|y|^2}{\e^2}\vp'\left(\frac{|y|^2}{\e^2}\right)-\frac{s+1}{2}\vp\left(\frac{|y|^2}{\e^2} \right)=0,$$
    thus $A_1=0$.
\item [(b)] If $|y|>\e/2$, we have $$\left|\frac{|y|^2}{\e^2}\vp'\left(\frac{|y|^2}{\e^2}\right)-\frac{s+1}{2}\vp\left(\frac{|y|^2}{\e^2} \right) \right| \leq C=C(\rho).$$ Since $|x|<\e/4$, then $||x|^2x-|x|^2y-2(x\cdot y)x|\leq C |y| |x|^2 $. Moreover, recall that supp$(\vp) \subset [0,3]$. As a consequence,
    $$|B_1|\leq C|x|^2 \int_{\e/2 \leq |y|\leq 3\e} \frac{1}{|y|^{s+2}}d \mu(y)\leq C \theta^s(3 \e) \frac{|x|^2}{\e^2}.$$
\end{itemize}
We now estimate $E_2$. Recall that $$E_2(x,y)=g''(\xi_{x,y})\frac{(|x|^2-2 x \cdot y)^2}{2 \e^4}(x-y),$$ with $\xi_{x,y} \in [|y|^2/\e^2, |x-y|^2/\e^2]$ and $$g''(r)=\frac{r^2 \vp''(r)-(s+1)r \vp'(r)+\frac{(s+1)(s+3)}{4}\vp(r)}{r^{(s+5)/2}}.$$ Denote $t=\max\{|y|,|x-y|\}$.
\begin{itemize}
\item [(a)] If $|y|\leq \e/2$, we have $|\xi_{x,y}|<1$ and thus $|g''(\xi_{x,y})| = 0$. So $A_2 =0$.

\item [(b)] If $|y|>\e/2$, we have $\xi_{x,y} \approx \frac{|y|^2}{\e^2}$, and so $|g''(\xi_{x,y})|\leq C(\rho) (\e/|y|)^{s+5}$. Moreover, if $|y|>3\e$, then $|x-y|>2\e$ and so $\xi_{x,y}>4$, which implies that $g''(\xi_{x,y}) = 0$. On the other hand,
    $$|(|x|^2-2 x \cdot y)^2(x-y)|\leq C |(|x|^2+|x||y|)^2(|x|+|y|)|\leq C |x|^2|y|^3.$$
    Therefore,
    \begin{align*}
    |B_2|&\leq C  \frac{|x|^2}{\e^{s+5}} \int_{\e/2 \leq |y|\leq 3\e} \left(\frac{\e}{|y|} \right)^{s+5}|y|^3 d\mu(y)\\ &\leq C |x|^2 \int_{\e/2 \leq |y|\leq 3\e} \frac{1}{|y|^{s+2}}d\mu(y)\leq C \theta^s(3 \e)\frac{|x|^2}{\e^2}.
    \end{align*}
\end{itemize}
\end{proof}

To prove theorem \ref{main3}, we will find a ball with high average density and an $n$-dimensional hyperplane $L$ such that all the points in the ball are close to $L$. Estimating densities from above and below, we will get a contradiction. We need the following auxiliary result.

\begin{lemma}\label{punts}
Suppose that $\mu(B(x_0,r)\cap F_\delta)\geq C_2 r^s$ and $n<s\leq n+1\leq m$. Then there exist a constant $C_3>0$ depending on $n,s,C_2$ and $M$ (from the equation (\ref{M})), and $n+2$ points $y_0,\ldots, y_{n+1} \in B(x_0,r) \cap F_\delta$ such that for $j=1,\ldots,n+1$
\begin{equation}\label{distancia}d(y_j,L_{j-1})\geq C_3 r, \end{equation}  where $L_j$ stands for the $j$-dimensional hyperplane that contains $y_0,\ldots,y_j$
\end{lemma}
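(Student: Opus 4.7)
The plan is to construct $y_0,\dots,y_{n+1}$ greedily. Pick $y_0\in B(x_0,r)\cap F_\delta$ arbitrarily, and, having already chosen $y_0,\dots,y_{j-1}$ spanning the affine plane $L_{j-1}$, look for $y_j\in B(x_0,r)\cap F_\delta$ with $d(y_j,L_{j-1})\ge C_3 r$. All I need is one such point at each step, so the existence of the whole tuple follows by iteration.

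The heart of the argument is a tubular-neighborhood mass bound: for any affine subspace $L$ of dimension $j-1\le n$ and any $\eta\in(0,1)$, I claim
$$\mu\bigl(\{y\in B(x_0,r)\cap F_\delta:\,d(y,L)<\eta r\}\bigr)\le C(n,M)\,\eta^{\,s-(j-1)}\,r^s.$$
To prove this, cover the intersection of $B(x_0,r)$ with the $\eta r$-tube around $L$ by $N\lesssim\eta^{-(j-1)}$ balls $B_i$ of radius $2\eta r$ whose centers lie in $L$. If $B_i\cap F_\delta=\emptyset$ it contributes nothing, while if some $z_i\in B_i\cap F_\delta$ exists one has $B_i\subset B(z_i,4\eta r)$, and the growth bound (\ref{M}) applied at $z_i$ gives $\mu(B_i)\le M(4\eta r)^s$. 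Summing over $i$ yields the displayed estimate.

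Since $j-1\le n<s$, the exponent $s-(j-1)$ is strictly positive at every step, so I can fix $C_3=C_3(n,s,M,C_2)\in(0,1)$ small enough that $C(n,M)\,C_3^{\,s-(j-1)}<C_2/2$ simultaneously for all $j\in\{1,\dots,n+1\}$. For this $C_3$, the mass of points of $B(x_0,r)\cap F_\delta$ lying within distance $C_3 r$ of $L_{j-1}$ is strictly less than $C_2 r^s\le\mu(B(x_0,r)\cap F_\delta)$, forcing the existence of some $y_j\in B(x_0,r)\cap F_\delta$ with $d(y_j,L_{j-1})\ge C_3 r$. Iterating for $j=1,\dots,n+1$ produces the required $n+2$ points.

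The only delicate bookkeeping is that the growth condition (\ref{M}) is stated only for balls centered in $F_\delta$, which is why the covering by balls centered on $L$ has to be combined with the substitution $B_i\subset B(z_i,4\eta r)$ rather than invoking $\mu(B_i)$ directly. Apart from that detail, this is an iterated pigeonhole argument whose only real input is the strict inequality $s>n$, which is precisely what keeps the tubular-mass exponent $s-(j-1)$ positive at every step up to $j=n+1$.
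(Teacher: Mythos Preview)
Your proof is correct and follows essentially the same approach as the paper's: a greedy inductive construction in which, at each step, a covering argument shows that the mass of $B(x_0,r)\cap F_\delta$ lying in a thin tubular neighborhood of $L_{j-1}$ is too small to exhaust the whole set. Your recentering of the covering balls onto points of $F_\delta$ before invoking \eqref{M} is in fact more careful than the paper's version, which simply appeals to ``the polynomial growth of degree $s$ of the measure'' without addressing where the balls are centered.
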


\begin{proof}
The proof of this lemma can be found in \cite{DS} (chapter 5, p. 28). For completeness we recall the arguments. We will use induction. Take $1\leq j \leq n$ and suppose that there exist $y_0, \ldots, y_j \in B(x_0,r)\cap F_\delta$ satisfying (\ref{distancia}) and such that for all $y \in B(x_0,r)\cap F_\delta$, denoting $L_j=\langle y_0, \ldots, y_j\rangle$, $$d(y,L_j)< \nu r$$ with $\nu>0$ to be chosen below. Then $B(x_0,r)\cap F_\delta$ can be covered by $C/ \nu^j$ balls with radius $\nu r$, so using the polynomial growth of degree $s$ of the measure,
$$C_2r^s \leq \mu(B(x_0,r)\cap F_\delta) \leq \frac{CM}{\nu^j} (\nu r)^s.$$
Taking $\nu < C(C_2/M)^{1/(s-j)}$ we get a contradiction.
\end{proof}

Below we will use the following notation. Given points $y_0,\ldots, y_k$, the $k$-dimensional hyperplane which contains these points is  
$\langle y_0,\ldots,y_k\rangle$. On the other hand, given vectors $u_1,\ldots,u_k$, the subspace spanned by $u_1,\ldots,u_k$ is denoted by $[u_1,\ldots,u_k]$. So we have
$\langle y_0,\ldots,y_k\rangle = y_0 + [y_1-y_0,\ldots,y_k-y_0]$.


\begin{lemma}\label{acotpral}
Suppose that $\mu(B(x_0,r)\cap F_\delta)\geq C_2 r^s$ and $r\leq \e/20$.  Consider points $y_0, \ldots, y_{n+1} \in B(x_0,r)\cap F_\delta$ and hyperplanes $L_0,\ldots, L_{n+1}$ satisfying (\ref{distancia}), like in lemma \ref{punts} (in particular $L_n=\langle y_0, \ldots, y_n \rangle$ and $L_{n+1}=\langle y_0, \ldots, y_{n+1} \rangle$). Then we have
\begin{equation}\label{acotpraleq}
\textrm{d}(y_{n+1},L_n) |U^\e(y_0)|\leq C_4 \left(\sum_{j=1}^{n+1}|R^s_{\vp,\e}(y_j)-R^s_{\vp,\e}(y_0)| +\theta^s(y_0,3\e)\frac{r^2}{\e^2}\right),
\end{equation}
where $C_4$ depends on $C_2$ and $M$, and denoting by $\Pi_{L_{n+1}}(z) $ the orthogonal projection of $z$ onto $L_{n+1}$,
\begin{align*}
U^\e(y_0)&=\int  \frac{1}{|z-y_0|^{s+1}}\biggl[ \vp\left( \frac{|z-y_0|^2}{\e^2} \right)\left( (n+1)-(s+1)\frac{| \Pi_{L_{n+1}}(z-y_0) |^2 }{|z-y_0|^2}\right)\\ &+ 2 \vp'\left( \frac{|z-y_0|^2}{\e^2} \right)\frac{| \Pi_{L_{n+1}}(z-y_0) |^2 }{\e^2} \biggr] d\mu(z).
\end{align*}
\end{lemma}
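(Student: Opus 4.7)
The plan is to identify $U^\e(y_0)$ with the trace of the linear operator $T^\e$ restricted to the subspace $V:=L_{n+1}-y_0$, and to compute that trace in a Gram--Schmidt basis built from the vectors $v_j:=y_j-y_0$. Lemma~\ref{taylorlemma} already controls these vectors: translating so that $y_0=0$, each $|v_j|\le 2r\le\e/10<\e/4$, so
\begin{equation*}
T^\e(v_j)\;=\;\bigl(R^s_{\vp,\e}\mu(y_j)-R^s_{\vp,\e}\mu(y_0)\bigr)-E(v_j),\qquad |E(v_j)|\le C_1\,\theta^s(y_0,3\e)\,\frac{r^2}{\e^2}.
\end{equation*}

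The key observation is the trace identity: for any orthonormal basis $\{u_1,\dots,u_{n+1}\}$ of $V$,
\begin{equation*}
\sum_{i=1}^{n+1} T^\e(u_i)\cdot u_i\;=\;U^\e(y_0).
\end{equation*}
This follows by dotting the defining formula of $T^\e(u_i)$ with $u_i$, summing, and applying the Pythagorean identity $\sum_i\bigl(u_i\cdot(z-y_0)\bigr)^2=|\Pi_{L_{n+1}}(z-y_0)|^2$.

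I now take $\{u_1,\dots,u_{n+1}\}$ to be the Gram--Schmidt orthonormalization of $\{v_1,\dots,v_{n+1}\}$, so $[u_1,\dots,u_j]=L_j-y_0$ for $j\le n$ and $u_{n+1}\perp L_n-y_0$. Concretely
\begin{equation*}
u_j\;=\;\frac{1}{d_j}\Bigl(v_j-\sum_{k<j}\alpha_{jk}v_k\Bigr),\qquad d_j:=d(y_j,L_{j-1}),
\end{equation*}
with $d_j\ge C_3 r$ for $j\le n$ by Lemma~\ref{punts}, while $d_{n+1}=h:=d(y_{n+1},L_n)$ is exactly the quantity appearing on the left-hand side of~\eqref{acotpraleq}. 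A short linear-algebra argument bounds $|\alpha_{jk}|\le C_5$ by a constant depending only on $n,C_2,M$: one first expresses $\Pi_{L_{j-1}-y_0}(v_j)$ in the orthonormal basis $u_1,\dots,u_{j-1}$ (coefficients at most $|v_j|\le 2r$), and then inverts the upper-triangular change-of-basis matrix from the $u$-basis to the $v$-basis, whose diagonal entries are $d_k\ge C_3 r$ and whose off-diagonal entries are $\lesssim r$; back substitution then shows its inverse has entries $\lesssim r^{-1}$, whence the coefficients are uniformly bounded.

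Putting everything together, for each $i\in\{1,\dots,n+1\}$ linearity of $T^\e$ gives
\begin{equation*}
h\,|T^\e(u_i)\cdot u_i|\;=\;\frac{h}{d_i}\,\Bigl|u_i\cdot T^\e\Bigl(v_i-\sum_{k<i}\alpha_{ik}v_k\Bigr)\Bigr|\;\lesssim\;\frac{h}{d_i}\sum_{j=1}^{n+1}|T^\e(v_j)|,
\end{equation*}
and since $h\le 2r$, $d_i\ge C_3 r$ for $i\le n$, and $h/d_{n+1}=1$, the ratio $h/d_i$ is bounded by a constant in every case. Summing over $i$, invoking the trace identity, and substituting the Lemma~\ref{taylorlemma} bound for each $|T^\e(v_j)|$ yields~\eqref{acotpraleq}. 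The only non-routine step is the linear-algebra bound on the Gram--Schmidt coefficients $\alpha_{jk}$; once it is in hand, the rest is bookkeeping with the trace identity and Lemma~\ref{taylorlemma}.
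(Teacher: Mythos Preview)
Your proposal is correct and follows essentially the same route as the paper: both identify $U^\e(y_0)$ with the trace $\sum_k T^\e(e_k)\cdot e_k$ over a Gram--Schmidt orthonormal basis of $L_{n+1}-y_0$, exploit the separation $d(y_j,L_{j-1})\ge C_3 r$ from Lemma~\ref{punts} to control the change-of-basis coefficients, and finish by invoking Lemma~\ref{taylorlemma}. The only cosmetic difference is that the paper bounds $|T^\e(e_k)|$ inductively (writing $e_k=\lambda_k y_k+\sum_{j<k}\tilde\lambda_j e_j$ and showing $|\lambda_k|\lesssim 1/r$, $|\tilde\lambda_j|\lesssim 1$), while you express each $u_j$ directly in the $v$-basis and bound the coefficients $\alpha_{jk}$ via back substitution; these are equivalent formulations of the same linear-algebra estimate.
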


\begin{proof}
 Suppose without loss of generality that $y_0=0$. Consider orthonormal vectors $e_1,\ldots, e_{n+1}$ such that $L_k=[ e_1,\ldots,e_k]$ for $k=1,\ldots, n+1$. Moreover, take $e_{n+1}=(y_{n+1}-u)/|y_{n+1}-u|$, where $u$ denotes the orthogonal projection of $y_{n+1}$ onto $L_n$.

Observe that, denoting $z_{(i)}=z\cdot e_i$ for $i=1,\ldots, n+1$,
\begin{align*}
U^\e(0)&=\int  \frac{1}{|z|^{s+1}}\biggl[ \vp\left( \frac{|z|^2}{\e^2} \right)\left( (n+1)-(s+1)\frac{\sum_{k=1}^{n+1}z_{(k)}^2}{|z|^2}\right)\\ &+ 2 \vp'\left( \frac{|z|^2}{\e^2} \right)\frac{\sum_{k=1}^{n+1}z_{(k)}^2}{\e^2} \biggr] d\mu(z)=\sum_{k=1}^{n+1}T^\e(e_k)\cdot e_k.
\end{align*}

To show (\ref{acotpraleq}), we will estimate $U^\e(y_0)$ from above using lemma \ref{taylorlemma}. Let us prove it by induction on $k$ ($k \leq n$):
\begin{equation}\label{modorto}
|T^\e(e_k)\cdot e_k| \leq |T^\e(e_k)| \lesssim \frac{1}{r} \sum_{j=1}^{k} |T^\e(y_j)|.
\end{equation}
For $k=1$ we write $e_1=y_1/|y_1|$. Since $|y_1|=\textrm{dist}(y_1,0) \geq C r$, $$|T^\e(e_1)| \lesssim \frac{1}{r}|T^\e(y_1)|.$$
Now suppose that equation (\ref{modorto}) holds for $k-1$. There exist $\lambda_j, \tilde \lambda_j \in \R$, with $\lambda_k \neq 0$, such that
$$e_k=\lambda_k y_k +\sum_{j=1}^{k-1}\lambda_j y_j=\lambda_k y_k +\sum_{j=1}^{k-1}\tilde \lambda_j e_j,$$
and so $$y_k=\frac{1}{\lambda_k} e_k - \sum_{j=1}^{k-1}\frac{\tilde \lambda_j}{\lambda_k} e_j.$$
Then,
$$\frac{1}{|\lambda_k|}=| y_k\cdot e_k|=\textrm{dist}(y_k, L_{k-1}) \geq  C r,$$
so $$|\lambda_k| \lesssim 1/r.$$
On the other hand, for $j=1,\ldots,k-1$, $$0= e_k\cdot e_j=\lambda_k  y_k\cdot e_j+\tilde \lambda_j,$$ and so $$|\tilde \lambda_j|=|\lambda_k y_k\cdot e_j |\leq C.$$
Finally,
\begin{align*}
|T^\e(e_k)|\lesssim \frac{1}{r}|T^\e(y_k)|+\bigl|\sum_{j=1}^{k-1} T^\e(e_j)\bigr|\lesssim \frac{1}{r} \sum_j |T^\e(y_j)|.
\end{align*}
Now, since $u \in L_n=[ e_1,\ldots,e_n ]$ there exist $\overline \lambda_1,\ldots,\overline \lambda_n$ with $|\overline \lambda_i|\leq C r$ for $i=1, \ldots, n$ such that $u=\sum_{i=1}^n \overline \lambda_i e_i$. Therefore,
\begin{align*}
|T^\e(e_{n+1})|&=\frac{1}{\textrm{dist}(y_{n+1},L_n)}|T(y_{n+1})-T(u)|\\& \lesssim \frac{1}{\textrm{dist}(y_{n+1},L_n)}\left(\sum_{j=1}^{n} |T^\e(y_j)|+|T^\e(y_{n+1})|\right).
\end{align*}
Applying lemma \ref{taylorlemma}, since $|y_j|\leq r$ for $i=1,\ldots,n+1$, we finally have
\begin{align*}
|U^\e(0)|=|\sum_{k=1}^{n+1}T^\e(e_k) e_k |\lesssim \frac{1}{\textrm{dist}(y_{n+1},L_n)}\left(\sum_{j=1}^{n+1}|R^s_{\vp,\e}(y_j)-R^s_{\vp,\e}(0)|+\theta^s(3\e)\frac{r^2}{\e^2}\right).
\end{align*}
\end{proof}

The following key lemma gives us a estimate from below of the term $|U^\e(y_0)|$.

\begin{lemma}\label{1beta}
Suppose that $\mu(B(x_0,r)) \geq C_2 r^s$ and consider points $y_0, \ldots, y_{n+1} \in B(x_0,r)\cap F_\delta$ as in Lemma \ref{acotpral}, and let
$\e_1=r/\tau$ with $\tau<1/4$. If $\rho>0$ is a constant small enough (depending only on $s$), then there exists an $\omega_0=\omega_0(\tau,s,\rho, M, C_2)\geq 1$ such that we can find an $\e>0$ satisfying $\e_1 \leq \e \leq \omega_0 \e_1$, $\theta^s(y_0,4 \e) \leq C \theta^s(y_0,\e)$, $\theta^s(y_0,\e) \geq C_2 \tau^s /2 $, and
$$|U^{\e}(y_0)|\geq\frac{7}{10} \frac{ \theta^s(y_0,\e)(n+1-s)} {\e}.$$
\end{lemma}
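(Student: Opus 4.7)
The plan is to locate a scale $\e$ with both controlled density and small mass in the thin annulus where $\vp$ transitions, and then to extract the $(n+1-s)$ factor by a careful algebraic split of the integrand.

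\smallskip\noindent
\emph{Choice of $\e$.}
Since $y_0\in B(x_0,r)\subset B(y_0,2r)$ and $\mu(B(x_0,r))\ge C_2r^s$, one has $\theta^s(y_0,\e_1)\ge C_2\tau^s$. Along the dyadic sequence $\e^{(k)}=4^k\e_1$, the upper bound $\theta^s(y_0,\cdot)\le M$ plus a geometric-growth pigeonhole produces some $k\le k_0(C_2,M,\tau,s)$ for which two consecutive scales $\e^{(k)},\e^{(k+1)}$ both satisfy the doubling $\theta^s(y_0,4\e^{(k+i)})\le C\theta^s(y_0,\e^{(k+i)})$ and $\theta^s(y_0,\e^{(k)})\ge C_2\tau^s/2$ (otherwise the density would grow geometrically, beating $M$ in a bounded number of steps). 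Within $[\e^{(k)},4\e^{(k)}]$ define $A_\e:=\{z:\e<|z-y_0|\le\e\sqrt{1+\rho+2\rho^2}\}$; since for each $z$ the set of $\e$ with $z\in A_\e$ has length $\lesssim\rho|z-y_0|$,
\[
\int_{\e^{(k)}}^{4\e^{(k)}}\mu(A_\e)\,d\e\lesssim\rho\,\e^{(k)}\mu(B(y_0,5\e^{(k)}))\lesssim\rho\,\e^{(k)}\mu(B(y_0,\e^{(k)}))
\]
by the doubling. Pigeonholing delivers $\e\in[\e^{(k)},4\e^{(k)}]$ satisfying $\mu(A_\e)\le c_1\rho\,\mu(B(y_0,\e))$, together with $\theta^s(y_0,4\e)\le C'\theta^s(y_0,\e)$ and $\theta^s(y_0,\e)\ge C_2\tau^s/2$ after relabelling constants; set $\omega_0:=4^{k_0+1}$.

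\smallskip\noindent
\emph{Integrand form and algebraic split.}
Assume $y_0=0$; with $u=|z|^2/\e^2$, $\psi(u):=\vp(u)/u^{(s+1)/2}$ and $P(z):=|\Pi_{L_{n+1}}z|^2/|z|^2\in[0,1]$, a direct manipulation of the bracket in $U^\e(0)$ gives
\[
\frac{1}{|z|^{s+1}}\Bigl[\vp(u)\bigl((n+1)-(s+1)P\bigr)+2\vp'(u)Pu\Bigr]=\frac{1}{\e^{s+1}}\bigl[(n+1)\psi(u)+2u\psi'(u)P\bigr].
\]
Since $\psi\equiv 1$ on $[0,1]$, the integrand equals the constant $(n+1)/\e^{s+1}$ on $B(0,\e)$ and is supported in $B(0,\e\sqrt{1+\rho+2\rho^2})$. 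The key split is
\[
(n+1)\psi+2u\psi'P=(n+1-s)\psi+\bigl(s\psi+2u\psi'\bigr)+2u\psi'(P-1),
\]
with three features: (a) $\int(n+1-s)\psi\,d\mu/\e^{s+1}\ge(n+1-s)\theta^s(0,\e)/\e$, since $\psi\ge\mathbf{1}_{B(0,\e)}$; (b) $2u\psi'(P-1)\ge 0$ on $A_\e$, because $\psi'\le 0$ there and $P\le 1$; (c) $s\psi(u)+2u\psi'(u)=u^{(2-s)/2}\tfrac{d}{du}[2\vp(u)/\sqrt u]$ is an exact $u$-derivative whose integral against the ``polynomial-$s$ model'' radial measure on $L_{n+1}$ (with $\nu(B(0,r))=r^s$) vanishes by the fundamental theorem of calculus, as the bracket is compactly supported and vanishes at $u=0$.

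\smallskip\noindent
\emph{Closing the estimate and main obstacle.}
Items (a)--(c) give $U^\e(0)\ge(n+1-s)\theta^s(0,\e)/\e+R_\e$, where $R_\e$ is the integral of the middle derivative piece against $\mu$. Converting to $r=|z|$ and writing $H(r):=2\e\vp(r^2/\e^2)/r$ (which decreases from $H(\e)=2$ to $H=0$ at $r=\e\sqrt{1+\rho+2\rho^2}$), $R_\e$ becomes a Stieltjes integral against $r\mapsto\mu(B(0,r))$; an Abel rearrangement compares $\mu$ to the polynomial model and reduces $|R_\e|$ to a controlled multiple of $\mu(A_\e)\le c_1\rho\,\mu(B(0,\e))$. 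The main obstacle is the sharpness of this last step: a naive bound $|s\psi+2u\psi'|\lesssim 1/\rho$ combined with $\mu(A_\e)\le c_1\rho\,\mu(B(0,\e))$ yields only an absolute-constant multiple of $\theta^s(0,\e)/\e$ for $|R_\e|$, which is insufficient when $(n+1-s)$ is small; exploiting the exact-derivative structure is what replaces the factor $1/\rho$ by the actual deviation rate of $\mu$ from its polynomial model. Combined with the model computation $U^\e(0)=(n+1-s)(sK_\rho/2)\theta^s(0,\e)/\e$ with $K_\rho:=\int_0^\infty\vp(u)u^{-3/2}\,du\to 2/s$ as $\rho\to 0$, one chooses $\rho=\rho(s)$ small enough that $sK_\rho/2\ge 9/10$ and the error in (c) does not exceed $\tfrac{1}{10}(n+1-s)\theta^s(0,\e)/\e$; this yields $|U^\e(y_0)|\ge\tfrac{7}{10}(n+1-s)\theta^s(y_0,\e)/\e$, as required.
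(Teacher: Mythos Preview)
Your algebraic split is correct and elegant, and you have correctly identified the middle term $R_\e=\int(s\psi+2u\psi')\,d\mu/\e^{s+1}$ as the crux. The gap is that your scale selection does not yield enough control to bound $R_\e$ by a multiple of $(n+1-s)\theta^s(\e)/\e$; the Abel rearrangement does not rescue the naive estimate. Writing $F(r)=\mu(B(0,r))$ and $G(r)=F(r)-\theta^s(\e)r^s$, your exact-derivative identity gives $R_\e=-\e^{-s-1}\int h'(r)G(r)\,dr$ with $h(r)=s\psi(u)+2u\psi'(u)$, $u=r^2/\e^2$. On the linear stretch of $\vp$ one has $h\approx 2\vp'/u^{(s-1)/2}\approx -2/\rho$, so $h$ drops from $h(\e)=s$ to about $-2/\rho$ and returns to $0$; hence $\int|h'|\,dr\approx 4/\rho$, not $O(1)$. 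Your hypotheses ``doubling plus $\mu(A_\e)\le c_1\rho\,\mu(B(0,\e))$'' give only $|G(r)|\le (c_1+C)\rho\,F(\e)$ on the annulus (take e.g.\ $F$ jumping by $c_1\rho F(\e)$ at $r=\e^+$ and then constant), so $|R_\e|\lesssim (1/\rho)\cdot\rho\,\theta^s(\e)/\e=\theta^s(\e)/\e$ with an absolute constant---exactly the naive bound you flagged as insufficient. The ``deviation from the polynomial model'' you invoke must be $o(\rho)$, and your averaging step does not deliver that.

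The paper's proof avoids this by choosing $\e$ via a different pigeonhole: with $\delta_k=\sup_{\e_1\le t\le 4^k\e_1}\theta^s(t)$, one finds $k$ with $\delta_{k+1}\le(1+\rho^2/4)\delta_k$ and then $\e$ with $\theta^s(\e)\ge\delta_k/(1+\rho^2/4)$. This yields the much stronger near-constancy $\theta^s(t)\le(1+\rho^2)\theta^s(\e)$ for all $t\in[\e,4\e]$, whence the annulus of radial width $\sim\rho\e$ carries mass at most $\bigl((1+\rho^2)(1+\rho+\rho^2)^{s/2}-1\bigr)\theta^s(\e)\e^s=(s\rho/2+O(\rho^2))\theta^s(\e)\e^s$, with the \emph{precise} leading constant $s/2$. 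Multiplying by the $2/\rho$ coming from $\vp'$ one gets the contribution $\le (s+O(\rho))\theta^s(\e)/\e$, and subtracting from the inner-ball term $(n+1)\theta^s(\e)/\e$ produces the factor $n+1-s$. In your framework this corresponds to $|G(r)|\le O(\rho^2)F(\e)$ on the annulus, which would indeed make the Abel bound $|R_\e|\lesssim\rho\,\theta^s(\e)/\e$; but you must change your choice of $\e$ to obtain it.
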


\begin{remark}
Notice that this lemma is useful only when $s$ is non integer, that is when $n<s<n+1$. This is one of the key steps of the proof of theorem \ref{main3}, where there are differences between the integer and the non integer case.
\end{remark}

\begin{proof}[Proof of lemma \ref{1beta}]
Clearly we may assume $s \neq n+1$. Also, we suppose that $y_0=0$. For $k \geq 0$, let us denote
$$\delta_k=\sup_{\e_1 \leq t \leq 4^k \e_1} \frac{\mu(B(0, t))}{t^s}.$$
Suppose that for all $k\geq 0$ we have $\delta_k \leq \delta_{k+1} / (1+\rho^2/4)$. Then, since $\delta_0 \geq C_2 \tau^s$,
\begin{align*}
C_2 \tau^s (1+\rho^2/4)^k \leq \delta_k \leq M,
\end{align*}
which leads to contradiction for $k$ big enough. Thus, there exists $\omega_0=\omega_0(\tau,s,\rho, M, C_2)>0$ and there exists $1\leq k\leq \log_4 \omega_0$ such that $\delta_k \geq \delta_{k+1} / (1+\rho^2/4)$. Take $\e \in [\e_1, 4^k \e_1]$ such that $ \delta_k \leq \theta^s(\e) (1+\rho^2/4)$. Then, $\theta^s(\e)\geq  \tau^s\mu(B(x_0,r))/(2r^s) \geq  C_2\tau^s/2$, and also for all $t$ such that $\e\leq t \leq 4\e$ we have
\begin{equation}\label{1betaeq}
\theta^s(t)=\frac{\mu(B(0,t))}{t^s}\leq \delta_{k+1}\leq \delta_k (1+\rho^2/4) \leq \theta^s(\e)(1+\rho^2).
\end{equation}
Given orthonormal vectors $\{e_i\}_{i=1}^{n+1}$ such that $[e_1,\ldots, e_{n+1}]=[y_1-y_0,\ldots, y_{n+1}-y_0]$, we denote
$$g_\e(z)=\frac{1}{|z|^{s+1}}\left[ \vp\left( \frac{|z|^2}{\e^2} \right)\left( (n+1)-(s+1)\frac{\sum_{i=1}^{n+1}z_{(i)}^2}{|z|^2}\right)+2 \vp'\left( \frac{|z|^2}{\e^2} \right)\frac{\sum_{i=1}^{n+1}z_{(i)}^2}{\e^2} \right],$$
where $z_{(i)}=z\cdot e_i$.
Consider the following domains:
\begin{itemize}
\item $A_1:=\{z \in \R^m: |z|< \e\}$,
\item $A_2:=\{z \in \R^m: \e\leq|z|\leq \e \sqrt{1+\rho^2}\}$,
\item $A_3:=\{z \in \R^m: \e\sqrt{1+\rho^2}<|z|<\e \sqrt{1+\rho+\rho^2}\}$,
\item $A_4:=\{z \in \R^m: \e\sqrt{1+\rho+\rho^2}<|z|<\e \sqrt{1+\rho+2\rho^2}\}$.
\end{itemize}
Then,
\begin{align*}
U^\e(0)&= \sum_{i=1}^4 \int_{A_i} g_\e(z)d\mu(z)=:I_1+I_2+I_3+I_4\geq I_1-|I_2|-|I_3|-|I_4|.
\end{align*}

First we consider $I_1$:
\begin{align*}
I_1&=\frac{1}{\e^{s+1}}\int_{|z|<\e} \frac{1}{|z|^{s+1}} \left[ |z|^{s+1}\left( (n+1)-(s+1)\frac{\sum_{j=1}^{n+1}z_{(j)}^2}{|z|^2} \right)+(s+1)|z|^{s-1} \sum_{j=1}^{n+1}z_{(j)}^2\right]d\mu(z)\\ &=\frac{n+1}{\e^{s+1}}\int_{|z|<\e}d\mu(z)=\frac{(n+1)\theta^s(\e)}{\e}.
\end{align*}

Now we estimate $I_2$ using the fact that for all $r>0$, $| \vp(r)| \leq C$ and $|\vp'(r)|\leq 1 / \rho$:
$$|I_2|\leq C \frac{1}{\rho \e^{s+1}}\mu (B(0,\e(1+2\rho^2)) \backslash B(0,\e))\leq  C \frac{\theta^s(\e)}{ \e}\left(\frac{(1+\rho^2)(1+2\rho^2)^s}{\rho}-\frac{1}{\rho}\right).$$
So, if $\rho$ is small enough, $$|I_2|\leq \frac{(n+1-s)\theta^s(\e)}{10 \e}.$$

Let us deal with $I_3$. Recall that in $A_3$, $|\vp ( \frac{|z|^2}{\e^2} )|=|-\frac{|z|^2}{\e^2\rho}+1+\rho+\frac{1}{\rho}|\leq 1$ and $|\vp' ( \frac{|z|^2}{\e^2} )|=\frac{1}{\rho}$. Using that for $z \in A_3$, $\sum_{i=1}^{n+1}z_{(i)}^2\ \leq \e^2(1+\rho+\rho^2)$ and $|z|^2\geq \e^2(1+\rho^2)$, we obtain
\begin{align*}
&|I_3|\leq\int_{A_3} \frac{1}{|z|^{s+1}}\left| (n+1)-(s+1)\frac{\sum_{i=1}^{n+1}z_{(i)}^2}{|z|^2}\right|+\frac{2}{\rho|z|^{s+1}} \left|\frac{\sum_{i=1}^{n+1}z_{(i)}^2}{\e^2}\right|d\mu(z)\\ &\leq \frac{(n+s+2) \mu(A_3)}{\e^{s+1}(1+\rho^2)^{(s+1)/2}}+ \frac{1+\rho+\rho^2}{\e^{s+1}(1+\rho^2)^{(s+1)/2}}\frac{2\mu(A_3)}{\rho} :=I_3^1+I_3^2.
\end{align*}
Observe that, by (\ref{1betaeq}), we have
\begin{align*}
\frac{\mu(A_3)}{\e^{s+1}} & \leq \frac{1}{\e^{s+1}}\left( \mu(B(0,\e \sqrt{1+\rho+\rho^2})\backslash B(0,\e))\right)\\ &\leq \frac{\theta^s(\e)}{\e}\left( (1+\rho^2)(1+\rho+\rho^2)^{s/2}-1\right).
\end{align*}
So
$$|I_3^1|\leq \frac{(n+1-s)\theta^s(\e)}{10 \e},$$ provided by $\rho$ is small enough. On the other hand, by (\ref{1betaeq}) again,
\begin{align*}
\frac{2\mu(A_3)}{\e^s \rho}&\leq\frac{2}{\e^s\rho}\left(\mu(B(0,\e \sqrt{1+\rho+\rho^2})\backslash B(0,\e))\right)\\ &\leq \frac{2\theta^s(\e)}{ \rho}((1+\rho) ( \sqrt{1+\rho+\rho^2})^s-1)
\end{align*}
Since $\lim_{\rho \to 0} \frac{ 2(1+\rho+\rho^2)^{s/2}-1}{\rho}=s$, we deduce  $$|I_3^2|\leq \frac{s \theta^s(\e)}{\e}+\frac{(n+1-s)\theta^s(\e)}{10 \e},$$
for $\rho$ small enough.

Using similar arguments to the ones used to estimate $|I_2|$ and $|I_3|$ we deduce that $$|I_4|\leq \frac{(n+1-s)\theta^s(\e)}{10 \e},$$ for $\rho$ small enough.

We conclude that
$$U^\e(0) \geq \frac{\theta^s(\e)}{\e}\left(n+1-s-\frac{3}{10}(n+1-s) \right)=\frac{7}{10}\frac{(n+1-s)\theta^s(\e)}{\e},$$ so taking $\rho$ small enough we are done.
\end{proof}

\begin{remark}
In the proof of the preceding lemma the special form of the function $\varphi$ plays an important role. The choice of this function is one of the key points in our arguments. 
\end{remark}

In the following lemma we are strongly using the hypothesis that $\lim_{\e \rightarrow 0} R^s_{\vp,\e} \mu(x)$ exists $\mu$-a.e.

\begin{lemma}\label{cauchy}
Given $0<\delta<1/4$, $x_0 \in \R^m$ and $r>0$. If $\e , r/\delta<\e_0$, then for all $x,z \in B(x_0,r) \cap F_\delta$ we have
$$|R_{\vp,\e}^s\mu(x)-R_{\vp,\e}^s \mu(z)|\leq C_6 \delta,$$
with $C_6$ depending on $\rho$, $s$ and $M$.
\end{lemma}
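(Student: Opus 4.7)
The plan is to split the difference $R^s_{\vp,\e}\mu(x) - R^s_{\vp,\e}\mu(z)$ using a large auxiliary scale $\e'$ and the triangle inequality, then use the definition of $F_\delta$ to control the deviation in $\e$ at each fixed point, and a direct Lipschitz-type estimate on the kernel $k_{\vp,\e'}$ to control the variation across the base points at the large scale $\e'$. Specifically, I would choose $\e'=r/\delta$ (which satisfies $\e'\le\e_0$ by hypothesis) and write
\begin{align*}
R^s_{\vp,\e}\mu(x)-R^s_{\vp,\e}\mu(z)
&=\bigl[R^s_{\vp,\e}\mu(x)-R^s_{\vp,\e'}\mu(x)\bigr]\\
&\quad+\bigl[R^s_{\vp,\e'}\mu(x)-R^s_{\vp,\e'}\mu(z)\bigr]
+\bigl[R^s_{\vp,\e'}\mu(z)-R^s_{\vp,\e}\mu(z)\bigr].
\end{align*}

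The first and third brackets are each bounded by $\delta$: since $x,z\in F_\delta$ and $\e,\e'\le\e_0$, this follows directly from the Cauchy-in-$\e$ condition built into the definition \eqref{defdelta} of $F_\delta$.

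For the middle bracket I would use the representation $R^s_{\vp,\e'}\mu(w)=\int k_{\vp,\e'}(w-y)\,d\mu(y)$ together with the gradient bound \eqref{kernel}, namely $\|\nabla k_{\vp,\e'}\|_\infty\le C(\rho)/\e'^{\,s+1}$. Since $k_{\vp,\e'}$ is supported in $B(0,3\e')$, the integrand in
\[
R^s_{\vp,\e'}\mu(x)-R^s_{\vp,\e'}\mu(z)=\int\bigl(k_{\vp,\e'}(x-y)-k_{\vp,\e'}(z-y)\bigr)d\mu(y)
\]
vanishes off $B(x,3\e')\cup B(z,3\e')$. Using the polynomial growth bound \eqref{M} (valid for $x,z\in F_\delta$ at every scale), this set has $\mu$-measure at most $2M(3\e')^s$. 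Hence
\[
|R^s_{\vp,\e'}\mu(x)-R^s_{\vp,\e'}\mu(z)|\le \frac{C(\rho)}{\e'^{\,s+1}}\,|x-z|\cdot 2M(3\e')^s\le C(\rho,s,M)\,\frac{|x-z|}{\e'}.
\]
Since $|x-z|\le 2r$ and $\e'=r/\delta$, the right-hand side is at most $C(\rho,s,M)\,\delta$.

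Summing the three pieces yields $|R^s_{\vp,\e}\mu(x)-R^s_{\vp,\e}\mu(z)|\le C_6\delta$ with $C_6=C_6(\rho,s,M)$, as required. There is no real obstacle; the only point to be careful about is that all the auxiliary scales used ($\e$ and $\e'=r/\delta$) stay below $\e_0$ so that the Cauchy property built into $F_\delta$ can be invoked, and that the polynomial growth bound on $\mu$-balls at $x$ and $z$ is available at the scale $3\e'$, which is granted by \eqref{M} at every radius once $x,z\in F_\delta$.
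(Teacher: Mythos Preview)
Your proof is correct and follows essentially the same approach as the paper: both set the auxiliary scale to $r/\delta$, bound the cross term at that scale via the gradient estimate \eqref{kernel} and the growth bound \eqref{M}, and handle the remaining two terms by the Cauchy-in-$\e$ condition defining $F_\delta$. The only cosmetic difference is that the paper encloses the support in $B(x_0,4\eta)$ whereas you use $B(x,3\e')\cup B(z,3\e')$; both lead to the same estimate.
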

\begin{proof}
Take $x,z \in B(x_0,r) \cap F_\delta$ and denote $\n= r/\delta$. By (\ref{kernel}) and using that $B(z, 3\n), B(x,3\n) \subset B(x_0,4 \n)$,
\begin{align*}
|R_{\vp,\n}^s\mu(x)-R_{\vp,\n}^s \mu(z)|&\leq \int |k_{\vp,\n}(x-y)-k_{\vp,\n}(z-y)|d\mu(y) \leq |z-x| \|\nabla k_{\vp,\n} \|_\infty \mu(B(x_0,4\n)) \\ &\leq\frac{C(\rho)M |z-x|}{\n}\leq C(\rho) M \delta.
\end{align*}
Now, since $x,z \in F_\delta$,
\begin{align*}
|R_{\vp,\e}^s\mu(x)-R_{\vp,\e}^s \mu(z)|&\leq |R_{\vp,\e}^s\mu(x)-R_{\vp,\n}^s \mu(x)|+|R_{\vp,\n}^s\mu(x)-R_{\vp,\n}^s \mu(z)|\\ &+|R_{\vp,\n}^s\mu(z)-R_{\vp,\e}^s \mu(z)|\leq C \delta.
\end{align*}

\end{proof}
\end{section}

\begin{section}{Proof of theorem 1}
Let $0<\delta, \tau<1/4$ to be chosen below and $\rho$ and $\omega_0=\omega_0(\tau, \rho)$ as in lemma \ref{1beta}. Consider the modified Riesz transform $R^s_{\vp,\e}$ depending on $\rho$. Suppose that $s$ is non integer, and so $n< s < n+1\leq m$. Let $x_0 \in F_\delta$ be a density point of $F_\delta$ with respect to $\mu$. Replacing $\mu$ by $\mu /\theta^{s,*}_\mu(x_0)$ if necessary, we may assume that $\theta^{s,*}_\mu(x_0)=1$. Take
\begin{equation}\label{1}
r< \e_0 \tau^{s+2}/ \omega_0 \mbox{  such that }\mu(B(x_0,r)\cap F_\delta) \geq r^s/2.
\end{equation}
Applying lemma \ref{punts} we can find $n+2$ points $y_0,\ldots,y_{n+1} \in B(x_0,r) \cap F_\delta$ such that
\begin{equation}\label{2}
\textrm{dist}(y_k,L_{k-1})\geq C r \textrm{ for } k=1,\ldots,n+1,
\end{equation}
where $L_k$ stands for the $k$-dimensional hyperplane that contains $y_0,\ldots,y_k$, and $C$ depends on $s,m,\theta^{s,*}_\mu(x_0)$ and the constant $M$ in \eqref{M} (which, in its turn, depends on the constants $r_0$ and $C_0$ in the definition of $F_\delta$ in \eqref{defdelta}, but not on $\varepsilon_0$!). Without loss of generality we suppose that $y_0=0$. Taking \begin{equation*}
\e_1=r/\tau,
\end{equation*}
by lemma \ref{1beta}, we can find $\e>0$ such that
\begin{equation}\label{4}
\e_1\leq \e\leq \omega_0 \e_1,
\end{equation}
\begin{equation}\label{5}
|U^\e(0)| \geq C\theta^s(\e) /\e,
\end{equation}
and
\begin{equation}\label{6}
\theta^s(\e) \geq C \tau^s \mbox{ and } \theta^s(4\e) \leq C \theta^s(\e).
\end{equation}
If we take
\begin{equation*}
\delta= \tau^{s+2}/ \omega_0,
\end{equation*}
(notice that $\omega_0$ is a large number, and so if $\tau$ is small enough, $\delta<1/4$), then we have
\begin{equation*}
\frac{r}{\delta}<\e_0\,\, \mbox{ by (\ref{1})},
\end{equation*}
and
\begin{equation*}
\e\leq \omega_0 \e_1 =\frac{\omega_0 r}{\tau}<\frac{\e_0 \tau^{s+2}}{\tau}<\e_0.
\end{equation*}
By (\ref{2}) and (\ref{5}), and lemmas \ref{acotpral} and \ref{cauchy}, we obtain
\begin{equation}\label{15}
\theta^s(\e)r\lesssim \e |U^\e(0)| \textrm{dist}(y_{n+1},L_n) \lesssim \e \delta+\theta^s(3\e)\frac{r^2}{\e}.
\end{equation}
By the definition of $\delta$ and $\e_1$, and by (\ref{4}) and (\ref{6}), we get
\begin{equation*}
\e \delta< \omega_0 \e_1 \frac{\tau^{s+2}}{\omega_0}=\e_1 \tau^{s+2}< r \theta^s(\e) \tau,
\end{equation*}
and by (\ref{4}) and (\ref{6}), and the definition of $\e_1$
\begin{equation*}
\theta^s(3\e)\frac{r^2}{\e}\lesssim \theta^s(\e)\frac{r^2}{\e_1}=\theta^s(\e)\tau r.
\end{equation*}
Thus, by (\ref{15}),
\begin{equation}
\theta^s(\e)r \lesssim \tau \theta^s(\e)r.
\end{equation}
Finally, taking $\tau$ small enough we get a contradiction.

\end{section}


\begin{thebibliography}{Mar}
\bibitem [DS]{DS} G. David, S. Semmes, {\it Singular integrals and rectifiable sets in $\R^n$: Beyond Lipschitz graphs }, Ast\'erisque No. 193 (1991), 152 pp.

\bibitem[Mar]{Marstrand} J.M. Marstrand, {\em The $(\varphi ,\,s)$ regular subsets of $n$-space,}
Trans. Amer. Math. Soc. 113 1964 369--392. 

\bibitem[Mat]{M} P. Mattila, {\it Cauchy singular integrals and rectifiability in measures of the plane}, Adv. Math. 115 (1995), no. 1, 1--34.

\bibitem [MM]{MM} P. Mattila, M.Melnikov, {\it Existence and weak-type inequalities for Cauchy integrals of general measures on rectifiable curves and sets}, Proc. Amer. Math. Soc. 120(1994), no 1, 143-149.

\bibitem[MP]{MP} P. Mattila, D. Preiss, {\it Rectifiable measures in $R\sp n$ and existence of principal values for singular integrals}, J. London Math. Soc. (2) 52 (1995), no. 3, 482--496.

\bibitem[Pr1]{P1} L. Prat, {\em Potential theory of signed Riesz kernels: capacity and Hausdorff measure,}  Int. Math. Res. Not.  2004,  no. 19, 937--981. 

\bibitem[Pr2]{P2} L. Prat, {\em Principal values for the signed Riesz kernels of non-integer dimension}, Preprint (2006). To appear in
Rocky Mountain J. Math.

\bibitem[Pre]{Preiss} D. Preiss, 
{\it Geometry of measures in $R\sp n$: distribution, rectifiability, and densities},  Ann. of Math. (2)  125  (1987),  no. 3, 537--643.

\bibitem[To1]{T0}
X. Tolsa, {\it Principal values for the Cauchy integral and rectifiability,} Proc. Amer. Math. Soc. 128 (2000) 2111-2119. 

\bibitem[To2]{T1} X. Tolsa, {\it Principal values for Riesz transforms and rectifiability}, J. Funct. Anal. 254 (2008), no. 7, 1811--1863.

\bibitem [To3]{T2}X. Tolsa, {\it Uniform rectifiability, Calder\'on-Zygmund operators with odd kernel, and quasiorthogonality}, Preprint (2006). To appear in Proc. London Math. Soc.

\bibitem[Vi]{V}M. Vihtil\"{a}, { \it The boundedness of Riesz $s$-transforms of measures in $\mathbb R\sp n$}, Proc. Amer. Math. Soc. 124 (1996), no. 12, 3797--3804.



\end{thebibliography}
\end{document}